\documentclass[12pt]{amsart}


\usepackage{amsfonts,amsmath,amsthm}
\usepackage{latexsym}


\newtheorem{theorem}{Theorem}[section]
\newtheorem{corollary}[theorem]{Corollary}

\newtheorem{remark}[theorem]{Remark}
\newtheorem{problem}[theorem]{Problem}

\newtheorem{proposition}[theorem]{Proposition}

\theoremstyle{definition}

\newcommand{\dst}{\displaystyle}

\newcommand{\ZZ}{\ensuremath{\mathbb{Z}}}
\newcommand{\NN}{\ensuremath{\mathbb{N}}}

\newcommand{\E}{{\bf E}}
\newcommand{\D}{{\bf D}}
\newcommand{\p}{{\bf P}}

\def \e {\varepsilon}

\newcommand{\vm}{\ensuremath{\mathbf{m}}}

\newcommand{\vu}{\ensuremath{\mathbf{u}}}

\newcommand{\vx}{\ensuremath{\mathbf{x}}}
\newcommand{\vs}{\ensuremath{\mathbf{s}}}
\newcommand{\vw}{\ensuremath{\mathbf{w}}}
\newcommand{\vv}{\ensuremath{\mathbf{v}}}
\newcommand{\vk}{\ensuremath{\mathbf{k}}}
\newcommand{\vy}{\ensuremath{\mathbf{y}}}
\newcommand{\vz}{\ensuremath{\mathbf{z}}}

\def \< {\langle}
\def \> {\rangle}


\begin{document}

\title[On the existence of flat orthogonal matrices]{On the existence of flat orthogonal matrices}

\author[Ph. Jaming]{Philippe Jaming}
\address{Institut de Math\'ematiques de Bordeaux UMR 5251,
Universit\'e de Bordeaux, cours de la Lib\'eration, F 33405 Talence cedex, France}
\email{Philippe.Jaming@math.u-bordeaux1.fr}

\author[M. Matolcsi]{Mat\'e Matolcsi}
\address{M. M.: Alfr\'ed R\'enyi Institute of Mathematics,
Hungarian Academy of Sciences POB 127 H-1364 Budapest, Hungary
Tel: (+361) 483-8307, Fax: (+361) 483-8333}
\email{matolcsi.mate@renyi.mta.hu}

\thanks{M. M. was supported by OTKA 109789 and ERC-AdG 321104.
P.J. was supported by the French ANR programs ANR
2011 BS01 007 01 (GeMeCod), ANR-12-BS01-0001 (Aventures).
This study has been carried out with financial support from the French State, managed
by the French National Research Agency (ANR) in the frame of the 'Investments for
the future' Programme IdEx Bordeaux - CPU (ANR-10-IDEX-03-02).\\
This research was partially sponsored by the French-Hungarian CNRS-MTA project 135724 (HAGACAANT)}

\keywords{ Orthogonal matrices, circulant matrices. MSC2010: 15B10}

\begin{abstract}
In this note we investigate the existence of flat orthogonal matrices, i.e. real orthogonal matrices with all entries having absolute value close to $\frac{1}{\sqrt{n}}$. Entries of $\pm \frac{1}{\sqrt{n}}$ correspond to Hadamard matrices, so the question of existence of flat orthogonal matrices can be viewed as a relaxation of the Hadamard problem.
\end{abstract}

\maketitle

\bigskip

\section{Introduction}

Let $M$ be a real orthogonal matrix of size $n\times n$. We are interested in the smallest and largest modulus among the entries of $M$:
\begin{equation}\label{lmum}
l_M:=\min_{1\le i,j\le n} |m_{i,j}|, \ \textrm{and} \ u_M:=\max_{1\le i,j\le n} |m_{i,j}|
\end{equation}
and, more precisely, in estimating the maximal possible value for $l_M$ and the minimal possible value of $u_M$.
In other words, we want to estimate the following quantities:
\begin{equation}\label{lnun}
l_n:=\max_{M\in O(n)}l_M  \quad\textrm{and} \quad
u_n:=\min_{M\in O(n)}u_M,
\end{equation}
where $O(n)$ denotes the group of orthogonal matrices.

\medskip

In order to determine $l_n$ (respectively, $u_n$) one has to control the lowest (resp. uppermost)
absolute value in an orthogonal matrix $M$. It is also natural to ask whether one can control both
quantities simultaneously. For this purpose we introduce the measure of
``flatness'' of an $n\times n$ orthogonal matrix $M$ as
\begin{equation}\label{fm}
f_M=\min \{\e>0 : \frac{1-\e}{\sqrt{n}}\le |m_{i,j}|\le  \frac{1+\e}{\sqrt{n}}, \ \ \textrm{for all} \ 1\le i,  j\le n \},
\end{equation}
and we are interested in how flat an orthogonal matrix can be:
\begin{equation}\label{fn}
f_n:=\min_{M\in O(n)}f_M.
\end{equation}

It is trivial to see that $l_n\le \frac{1}{\sqrt{n}}$, $u_n\ge \frac{1}{\sqrt{n}}$ for all $n$, and the famous Hadamard conjecture states that if $4|n$ then $l_n=u_n=\frac{1}{\sqrt{n}}$. It is also easy to see that
$f_n\geq \frac{c}{n}$, unless there exists an $n\times n$ Hadamard matrix (Proposition \ref{smallerror} below).
This note was motivated by the natural question of L. Baratchart (personal communication), as to whether $l_n\ge \frac{c}{\sqrt{n}}$ for some absolute constant $c$. The answer to this question is positive and provided by Theorem \ref{thm:block} below. We will also prove $u_n\le \frac{c}{\sqrt{n}}$ with another absolute constant $c$ in Theorem \ref{thm:un}. However, we list here some more restrictive questions on flat orthogonal matrices, all of which we have only partial answers to.

\begin{problem}\label{probln}
Is it true that $l_n=(1+o(1))\frac{1}{\sqrt{n}}$?
\end{problem}

\begin{problem}\label{probun}
Is it true that $u_n=(1+o(1))\frac{1}{\sqrt{n}}$?
\end{problem}
\noindent Can we control $l_n$ and $u_n$ simultaneously?
\begin{problem}\label{probfn}
Is it true that $f_n=o(1)$?
\end{problem}

These questions can be seen as relaxations of the Hadamard problem,
and the matrices leading to such bounds could be called almost Hadamard matrices.
However, a different notion of {\em almost Hadamard matrices} was already introduced and considered in \cite{banica1, banica2} (in those papers the emphasis is on various matrix norms and not on the entries of the matrix). Therefore, to make a clear distinction, we prefer to use the terminology {\it flat orthogonal matrices} here.

Finally, circulant matrices play a special role in applications, therefore we can add this as an additional constraint.

\begin{problem}\label{probcircfn}
Let $f_n^{circ}:=\min_{M\in O(n)\cap Circ(n)}f_M$, where $Circ(n)$ denotes the set of $n\times n$ circulant matrices and $f_M$ was defined in \eqref{fm}. Is it true that $f_n^{circ}=o(1)$?
\end{problem}

We firmly believe that the answer to the first three problems is positive, while we are undecided as to the last one. Recall that a conjecture of Ryser asserts that there are no $n\times n$ circulant Hadamard matrices if $n>4$.

\medskip

The remaining of this paper is split into two sections. The first one is devoted to general constructions that lead to bounds on $u_n$ and $l_n$ valid in arbitrary dimension. We then devote the last section to improved bounds when the size of the matrix have various arithmetic properties.

\section{General constructions}

We begin by a simple result which shows that the flatness parameter $f_n$ cannot be expected to be very small. In other words, requiring $f_n$ to be very small is equivalent to requiring a Hadamard matrix of order $n$ to exist.

\begin{proposition}\label{smallerror}
Let $n\geq 3$ and let $\varepsilon>0$ be such that
$$
\varepsilon <\begin{cases}
                 1/n&\mbox{if $n$ is odd}\\
                 2/n&\mbox{if $n$ is even}
                \end{cases}.
$$
 Assume there exists an orthogonal matrix $M$ such that
 for every $j,k=1,\dots, n$,
 \begin{equation}
  \label{eq:almosthadamard}
  \left(\frac{1-\varepsilon}{n}\right)^{1/2}\leq |m_{j,k}|\leq \left(\frac{1+\varepsilon}{n}\right)^{1/2}.
 \end{equation}
Then $n$ is a multiple of $4$ and there exists a Hadamard matrix of order $n$.
\end{proposition}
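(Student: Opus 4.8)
The plan is to show that the near-flatness condition \eqref{eq:almosthadamard} forces each entry $m_{j,k}$ to be \emph{exactly} $\pm 1/\sqrt n$, so that $\sqrt n\, M$ is a genuine Hadamard matrix; the classical fact that Hadamard matrices exist only in orders $1,2$ and multiples of $4$ then gives the arithmetic conclusion (and $n\ge 3$ rules out the orders $1,2$). The key is to exploit the orthogonality relations between pairs of rows together with an integrality/parity argument on the number of sign agreements.

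First I would normalize: write $m_{j,k}=\frac{1}{\sqrt n}(1+\delta_{j,k})$ or $m_{j,k}=-\frac{1}{\sqrt n}(1+\delta_{j,k})$, so that \eqref{eq:almosthadamard} says (after squaring) that $|n\, m_{j,k}^2-1|$ is small; precisely $n\,m_{j,k}^2\in[1-\varepsilon,1+\varepsilon]$. Fix two distinct rows, say rows $1$ and $2$. Orthogonality gives $\sum_{k=1}^n m_{1,k}m_{2,k}=0$, while normalization of row $1$ gives $\sum_k m_{1,k}^2=1$, i.e. $\sum_k n\,m_{1,k}^2=n$ with each summand in $[1-\varepsilon,1+\varepsilon]$. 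The cross term: let $\sigma_k=\sign(m_{1,k}m_{2,k})\in\{\pm1\}$ and $p=\#\{k:\sigma_k=+1\}$, $q=n-p$. Then $0=\sum_k m_{1,k}m_{2,k}=\sum_{\sigma_k=1}|m_{1,k}m_{2,k}|-\sum_{\sigma_k=-1}|m_{1,k}m_{2,k}|$, and each $|m_{1,k}m_{2,k}|=\frac1n\sqrt{(1+\delta_{1,k})(1+\delta_{2,k})}$ lies in $[\frac{1-\varepsilon}{n},\frac{1+\varepsilon}{n}]$. Hence $\bigl|\,p-q\,\bigr|\le \dfrac{(p+q)\varepsilon}{1}=n\varepsilon$ roughly — more carefully, $|p-q|\cdot\frac{1-\varepsilon}{n}\le \sum|m_{1,k}m_{2,k}|\le (p+q)\frac{\varepsilon\cdot(\text{something})}{n}$; the honest bound is $\bigl|(p-q)\bigr|\le \dfrac{2\varepsilon}{1+\varepsilon}\,\dfrac{\min(p,q)\cdot\text{(stuff)}}{}$ — I would simply estimate $|p-q|\le \frac{n\varepsilon}{1-\varepsilon}<1$ under the stated hypothesis $\varepsilon<2/n$ (and $<1/n$ for $n$ odd), so $p-q$ is an integer of absolute value $<2$, and in fact the sharper computation will pin it down.

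The heart of the matter is the parity step. Since $p+q=n$, the integer $p-q$ has the same parity as $n$. If $n$ is odd, $p-q$ is odd, hence $|p-q|\ge 1$; but the estimate above under $\varepsilon<1/n$ forces $|p-q|<1$, a contradiction — so no such $M$ exists for odd $n\ge3$, which is consistent with the conclusion (there is simply nothing to prove, or rather the hypothesis is vacuous). If $n$ is even, $p-q$ is even, so either $p=q$ (forcing $n\equiv 0\bmod 4$ once we refine, since $p-q=0$ means $p=q=n/2$) or $|p-q|\ge2$; the bound $|p-q|<n\varepsilon/(1-\varepsilon)\le 2\cdot\frac{1}{1-2/n}$ — I would choose the hypothesis constant precisely so that this rules out $|p-q|\ge 2$ except possibly at the boundary, giving $p-q=0$. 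Running this for \emph{every} pair of distinct rows shows every pair of rows of $\sqrt n M$ has exactly $n/2$ sign agreements, which is the Hadamard orthogonality pattern — but we must also upgrade "the entries are close to $\pm1/\sqrt n$'' to "equal to $\pm1/\sqrt n$''. For that, I would feed $p=q=n/2$ back into $\sum_k m_{1,k}m_{2,k}=0$: this says $\sum_{\sigma_k=1}|m_{1,k}m_{2,k}|=\sum_{\sigma_k=-1}|m_{1,k}m_{2,k}|$ with $n/2$ terms on each side, and combine with row normalizations $\sum_k m_{j,k}^2=1$; summing such identities over all rows/pairs and using $\sum_k n m_{j,k}^2=n$ with all summands in $[1-\varepsilon,1+\varepsilon]$, a convexity/extremality argument should force $\delta_{j,k}=0$ for all $j,k$. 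Alternatively — and this is cleaner — once every pair of rows agrees in exactly $n/2$ positions, consider the $\pm1$ matrix $S=(\sign m_{j,k})$: I claim $S$ is already Hadamard. Indeed $SS^T$ has diagonal $n$ and off-diagonal entries $p-q=0$, so $SS^T=nI$, i.e. $S$ is Hadamard; this alone forces $4\mid n$ (for $n\ge 3$), \emph{regardless} of whether the $\delta$'s vanish.

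**Main obstacle.** The delicate point is getting the constant in the hypothesis exactly right so that the inequality $|p-q|<\text{(integer threshold)}$ does the job in both parities — in particular handling the even case where one must exclude $|p-q|=2$ but allow (indeed force) $p-q=0$, which is why the even bound is $2/n$ rather than $1/n$. A careful bookkeeping of the cross-term estimate, $\bigl|\sum_k m_{1,k}m_{2,k}\bigr|\ge \frac{|p-q|}{n}(1-\varepsilon) - \frac{\min(p,q)}{n}\cdot 2\varepsilon\cdot(\dots)$ or the sharper two-sided version, is where the real work lies; I would isolate this as the one computational lemma. Everything after "$SS^T=nI$'' is the textbook Hadamard-order argument (pick three rows, count, get $4\mid n$), which I would cite rather than reprove.
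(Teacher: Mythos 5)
Your outline is, in substance, the paper's own argument: compare the sign patterns of two rows, let $p$ (resp.\ $q=n-p$) count the agreements (resp.\ disagreements), use that each product $|m_{1,k}m_{2,k}|$ lies in $\left[\frac{1-\varepsilon}{n},\frac{1+\varepsilon}{n}\right]$ to bound $p-q$, invoke the parity $p-q\equiv n \pmod 2$ to get a contradiction for odd $n$ and to force $p=q=n/2$ for even $n$, and then pass to the $\pm1$ sign matrix $S$, which satisfies $SS^T=nI$ and is therefore Hadamard, giving $4\mid n$ for $n\ge 3$. Your decision to abandon the attempt to show the entries are exactly $\pm 1/\sqrt n$ (they need not be) in favour of the sign-matrix argument is the right one and is exactly what the paper does.

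The one real hole in your write-up is precisely the step you flag as the main obstacle: the quantitative bound on $p-q$. As written, the claim ``$|p-q|\le n\varepsilon/(1-\varepsilon)<1$ under $\varepsilon<2/n$'' is wrong on both counts: $n\varepsilon/(1-\varepsilon)$ exceeds $1$ even when $\varepsilon<1/n$, and it can exceed $2$ when $\varepsilon$ is close to $2/n$, so this lossy bound cannot rule out $|p-q|=2$ in the even case. The correct estimate is immediate and lossless: from $0=\sum_k m_{1,k}m_{2,k}$, bounding the $p$ positive and $q$ negative terms separately gives $0\le \frac{p(1+\varepsilon)-q(1-\varepsilon)}{n}$ and $0\ge \frac{p(1-\varepsilon)-q(1+\varepsilon)}{n}$, hence $|p-q|\le \varepsilon n$, which under the stated hypotheses is $<1$ for odd $n$ (contradicting $|p-q|\ge1$) and $<2$ for even $n$ (forcing $p=q=n/2$) --- exactly the threshold matching you were aiming for. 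With that two-line computation inserted, your proof is complete and coincides with the paper's.
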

\begin{proof}
Let $S$ denote the matrix defined by the sign of the entries of $M$, i.e. $s_{j,k}=\frac{1}{\sqrt{n}}\textrm{sign}\,m_{j,k}$. Consider two rows $\vs_j$ and $\vs_k$ of $S$. Let $r$ denote the number of columns where the entries in $\vs_j$ and $\vs_k$ match, and $n-r$ where they differ. Then, for the corresponding rows $\vm_j, \vm_k$ of $M$ we have
\begin{eqnarray*}
0&=&\langle \vm_j,\vm_k\rangle\\
&&\begin{cases}\dst\leq \frac{r}{n}(1+\varepsilon)- \frac{n-r}{n}(1-\varepsilon)
   =\frac{2r-n+\varepsilon n}{n}\\
  \dst \geq \frac{r}{n}(1-\varepsilon)- \frac{n-r}{n}(1+\varepsilon)
   =\frac{2r-n-\varepsilon n}{n}
  \end{cases}.
\end{eqnarray*}
However, if $n$ is odd the interval  $(2r-n-\varepsilon n, 2r-n+\varepsilon n)$ does not contain zero (or any even number), a contradiction. If $n$ is even, the interval  $(2r-n-\varepsilon n, 2r-n+\varepsilon n)$ contains zero if and only if $r=\frac{n}{2}$, in which case the corresponding rows $\vs_j, \vs_k$ are also orthogonal, and we conclude that $S$ is a Hadamard matrix.
\end{proof}

We continue with a simple block construction which proves that $l_n$ is at least as large as $\frac{1}{2\sqrt{n}}$.

\begin{theorem}\label{thm:block}
For any dimension $n$ we have $l_n\ge \frac{1}{2\sqrt{n}}$.
\end{theorem}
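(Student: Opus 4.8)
The plan is to build an $n \times n$ orthogonal matrix out of $2\times 2$ rotation blocks so that every entry has absolute value at least $\frac{1}{2\sqrt n}$, and then, since the claimed bound is an inequality, fill in small dimensions or the parity nuisance by an ad hoc adjustment. Concretely, start from a Hadamard-like or Fourier-like orthogonal matrix $Q$ of size roughly $n/2$ whose entries are all of modulus about $\frac{1}{\sqrt{n/2}} = \sqrt{2}/\sqrt n$; the cleanest choice is to take $Q$ to be any orthogonal matrix with a known flatness bound in dimension $m := \lfloor n/2 \rfloor$ or, more simply, to argue directly. I would replace each scalar entry $q_{i,j}$ of an $m\times m$ orthogonal matrix by a scaled $2\times 2$ rotation $q_{i,j} R(\theta)$ with $R(\theta) = \begin{pmatrix}\cos\theta & -\sin\theta\\ \sin\theta & \cos\theta\end{pmatrix}$, choosing a single angle $\theta$ with $|\cos\theta| = |\sin\theta| = \frac{1}{\sqrt 2}$ (i.e. $\theta = \pi/4$), so that the tensor-type matrix $M = Q \otimes R(\pi/4)$ is orthogonal of size $2m$ and every entry of $M$ has modulus $\frac{1}{\sqrt 2}|q_{i,j}|$.

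The point is then a bootstrap: if $Q$ is itself an orthogonal matrix in dimension $m$ with all entries of modulus $\ge \frac{1}{\sqrt{2m}} \cdot c$ for some constant, then $M = Q\otimes R(\pi/4)$ has all entries of modulus $\ge \frac{1}{\sqrt 2}\cdot\frac{c}{\sqrt{2m}} = \frac{c}{2\sqrt m} = \frac{c}{2\sqrt{2m}}\sqrt 2$, and we want this to be $\ge \frac{1}{2\sqrt{2m}} = \frac{1}{2\sqrt n}$. So the recursion multiplies the lower-bound constant by $\frac{1}{\sqrt 2}$ each time we double the dimension, which is the wrong direction. I would therefore \emph{not} iterate the tensor construction but instead use it once: take $Q$ to be the $m\times m$ identity-free building block that is simplest to flatten, namely a single suitably normalized orthogonal matrix whose entries are all exactly $\pm\frac{1}{\sqrt m}$ when $m$ is a Hadamard order, and otherwise the best available flat orthogonal matrix. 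The robust statement that needs no arithmetic hypothesis is: writing $n = 2m + \rho$ with $\rho \in\{0,1\}$, form an orthogonal direct-sum-plus-rotation-mixing matrix that on a $2\times 2$ block structure rotates by $\pi/4$ to guarantee the $\frac{1}{\sqrt 2}$ factor, and handles the leftover coordinate (when $n$ is odd) by a $3\times 3$ flat orthogonal block, for which one checks by hand that a flat orthogonal $3\times 3$ matrix exists with all entries of modulus $\ge \frac{1}{2\sqrt 3}$.

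The main obstacle I anticipate is precisely the bookkeeping that turns "rotation blocks give a factor $\frac{1}{\sqrt 2}$" into the clean bound $\frac{1}{2\sqrt n}$: one must start the construction from a genuinely flat matrix (entries $\asymp \frac{1}{\sqrt m}$, not merely orthonormal rows), and such a seed is only literally available when $m$ is a Hadamard order — so the construction should be phrased as "embed into the next Hadamard order and project/truncate," which costs another constant. Getting the constant to be exactly $2$ (rather than some larger absolute constant) is the delicate part, and I expect the authors achieve it by a careful two-step block construction: pair up coordinates, apply $R(\pi/4)$ within pairs to spread mass evenly, and then apply a Hadamard (or near-Hadamard, via Paley or a known gap bound on Hadamard orders) matrix across the pairs, with the odd case dispatched separately. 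The orthogonality of the result is automatic from the block/tensor structure; the only real work is verifying the uniform lower bound $\frac{1}{2\sqrt n}$ on the moduli in each of the cases $n \equiv 0, 1, 2, 3 \pmod 4$.
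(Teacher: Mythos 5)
There is a genuine gap: your sketch never produces an orthogonal matrix of size $n$ for general $n$ with all entries bounded below. The only fully specified step is the tensor construction $Q\otimes R(\pi/4)$, and as you yourself observe it is circular/degrading: it needs a flat seed $Q$ in dimension $m\approx n/2$, which is exactly the object whose existence is being proved, and iterating loses a factor $\frac{1}{\sqrt2}$ per step. The two devices you offer to escape this do not work as stated. First, ``embed into the next Hadamard order and project/truncate'' is not a construction: deleting rows or columns of a Hadamard matrix destroys orthogonality, and you give no procedure that restores orthogonality while keeping every entry bounded below by a constant times $\frac{1}{\sqrt n}$ (let alone with the constant $2$). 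Second, handling a leftover coordinate by a direct sum with a $3\times 3$ flat block is fatal for this problem: a block-diagonal matrix has \emph{zero} entries off the blocks, so direct sums can never be used to lower-bound $l_n$; the unspecified ``rotation mixing'' that is supposed to repair this is precisely where all the work lies, and it is missing.

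For comparison, the paper's proof fills in the zeros explicitly. Write $n=2^r+q$ with $q<2^r$, take the normalized Hadamard matrix $H$ of order $2^r$ (entries $\pm\frac{1}{\sqrt{2^r}}$), border it by an identity block $I_q$ to get an $n\times n$ orthogonal matrix $\tilde M$, and then conjugate, $M=U^T\tilde M U$, by an explicit orthogonal $U$ whose nontrivial part is the $2q\times 2q$ rotation built from $\pm\frac{1}{\sqrt2}I_q$ blocks. The conjugation mixes the identity block with a $q\times q$ corner of $H$, so every formerly zero entry becomes $\pm\frac{1}{\sqrt2}$ times a Hadamard entry or $\frac12(\pm h_{ij}+\delta_{ij})$, and every entry of $M$ has modulus at least $\frac{1}{2\sqrt{2^r}}\ge\frac{1}{2\sqrt n}$. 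Your instinct that $\pi/4$-rotations supply the $\frac{1}{\sqrt2}$ factors is in the right spirit, but without the ``Hadamard plus identity, then conjugate'' step there is no argument covering arbitrary $n$ with the stated constant.
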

\begin{proof}
Assume $n=2^r+q$ where $q<2^r$, and introduce the notation $s=2^r-q$ for brevity. Let $H$ be a Hadamard matrix of order $2^r$ (of course, such a matrix exists, e.g. the tensorial power $F_2^{\otimes r}$). We use the normalization that $H$ has entries $\pm\frac{1}{\sqrt{2^r}}$ instead of $\pm 1$. Let $\tilde{M}$ be the extension of $H$ by an identity matrix of order $q$ in the lower right. Split $\tilde{M}$ into blocks of size $s$ and $q$ and $q$ as follows (the indexes simply indicating the sizes of the blocks):
$$
\tilde{M}=\left[
\begin{array}{c|c|c}
H_{s,s} & H_{s,q} & 0\\ \hline
H_{q,s} & H_{q,q} & 0\\ \hline
0 & 0 & I_q
\end{array}\right].
$$

Let $U$ denote the following orthogonal block-matrix:
$$
U=\left[
\begin{array}{c|c|c}
I_{s} & 0 & 0\\ \hline
0 & \frac{1}{\sqrt{2}}I_{q} & -\frac{1}{\sqrt{2}}I_{q}\\ \hline
0 & \frac{1}{\sqrt{2}}I_{q} & \frac{1}{\sqrt{2}}I_{q}
\end{array}\right],
$$
and let $M:=U^T \tilde{M} U$. A direct calculation shows that
$$
M=\left[
\begin{array}{c|c|c}
H_{s,s} & \frac{1}{\sqrt{2}}H_{s,q} & -\frac{1}{\sqrt{2}}H_{s,q}\\ \hline
\frac{1}{\sqrt{2}}H_{q,s} & \frac{1}{2}(H_{q,q}+I_q) & \frac{1}{2}(-H_{q,q}+I_q)\\ \hline
-\frac{1}{\sqrt{2}}H_{q,s} & \frac{1}{2}(-H_{q,q}+I_q) & \frac{1}{2}(H_{q,q}+I_q)
\end{array}\right].
$$
The smallest modulus among the entries of $M$ is $\frac{1}{2\sqrt{2^r}}\ge \frac{1}{2\sqrt{n}}$, which proves the theorem. (Note, however, that the largest appearing modulus is $\frac{1}{2}+\frac{1}{\sqrt{2^r}}$, so this construction gives no indication with respect to Problem \ref{probfn}.)
\end{proof}

Proving that $u_n\le \frac{c}{\sqrt{n}}$ is also fairly easy, as one can use a block-diagonal construction.

\begin{theorem}\label{thm:un}
For any dimension $n$ we have $u_n\le \frac{2+o(1)}{\sqrt{n}}$.
\end{theorem}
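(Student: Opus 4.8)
The plan is to produce, in each dimension, one explicit orthogonal matrix all of whose entries are bounded in modulus by $\sqrt2/\sqrt n$, and then, to keep the argument in the spirit of the block constructions above, to assemble it from two diagonal blocks of size roughly $n/2$; the factor $2$ in the statement (and the $o(1)$) will come only from the rounding $\lceil n/2\rceil,\lfloor n/2\rfloor$.

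First I would isolate the building block. For an integer $m\ge 1$ let $W^{(m)}$ be the $m\times m$ real matrix with entries
$$
W^{(m)}_{j,k}=\frac{1}{\sqrt m}\Bigl(\cos\frac{2\pi jk}{m}+\sin\frac{2\pi jk}{m}\Bigr),\qquad 0\le j,k\le m-1 .
$$
I would check that $W^{(m)}$ is symmetric and that $\bigl(W^{(m)}\bigr)^2=I_m$: using $\cos a\cos b+\sin a\sin b=\cos(a-b)$ and $\cos a\sin b+\sin a\cos b=\sin(a+b)$, the $(j,\ell)$ entry of $m\bigl(W^{(m)}\bigr)^2$ equals $\sum_{k=0}^{m-1}\cos\frac{2\pi(j-\ell)k}{m}+\sum_{k=0}^{m-1}\sin\frac{2\pi(j+\ell)k}{m}$, and the two geometric sums evaluate to $m\,\delta_{j,\ell}$ and $0$ respectively. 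Hence $W^{(m)}$ is a symmetric involution, so $W^{(m)}\in O(m)$. Moreover $\cos\theta+\sin\theta=\sqrt2\cos(\theta-\pi/4)$, so every entry of $W^{(m)}$ has modulus at most $\sqrt2/\sqrt m$. (Equivalently, one could realify the unitary discrete Fourier matrix of size $m$, obtaining an element of $O(2m)$ with entries of modulus $\le 1/\sqrt m$; I prefer $W^{(m)}$ because it is available in every size, including odd ones.)

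Then I would set $p=\lceil n/2\rceil$, $q=\lfloor n/2\rfloor$ and take $M=\mathrm{diag}\bigl(W^{(p)},W^{(q)}\bigr)\in O(n)$. Every entry of $M$ has modulus at most $\max\{\sqrt2/\sqrt p,\sqrt2/\sqrt q\}=\sqrt2/\sqrt q$, and since $q=\lfloor n/2\rfloor\ge(n-1)/2$ this is at most $2/\sqrt{n-1}=(2+o(1))/\sqrt n$. Therefore $u_n\le u_M\le(2+o(1))/\sqrt n$, as claimed. The only step needing genuine care is the involution identity $\bigl(W^{(m)}\bigr)^2=I_m$ for the building block; everything else is bookkeeping about the block sizes. (In fact the single block $M=W^{(n)}$ already gives the slightly stronger bound $u_n\le\sqrt2/\sqrt n$, but the two‑block version keeps the construction parallel to the one used in Theorem~\ref{thm:block}.)
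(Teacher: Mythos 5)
Your proof is correct, but it takes a genuinely different and more elementary route than the paper. The paper proves the bound by combining the prime-dimension constructions of Propositions \ref{had4km1} and \ref{n4kp1} (which give $u_p=(1+o(1))/\sqrt{p}$ for primes $p$) with a block-diagonal argument and a weak Goldbach-type theorem of Vinogradov, writing $n$ as a sum of three or four nearly equal primes; this yields $(\sqrt{3}+o(1))/\sqrt{n}$ for odd $n$ and $(2+o(1))/\sqrt{n}$ for even $n$. You instead exhibit a single explicit matrix in every dimension, the discrete Hartley matrix $W^{(m)}$ with kernel $\cos\theta+\sin\theta=\sqrt{2}\cos(\theta-\pi/4)$; your verification that it is a symmetric involution is sound (the cosine geometric sum gives $m\,\delta_{j,\ell}$, and the sine sum vanishes both when $m\mid j+\ell$, since each term is zero, and otherwise, since the full exponential sum vanishes), so $W^{(n)}\in O(n)$ with all entries of modulus at most $\sqrt{2}/\sqrt{n}$. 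This needs no number theory at all and in fact establishes the stronger uniform bound $u_n\le\sqrt{2}/\sqrt{n}$ for every $n$; your two-block variant, which only recovers $(2+o(1))/\sqrt{n}$, is an unnecessary weakening and you could simply state the one-block bound (the asymptotic nature of the claim also disposes of any concern about very small block sizes). What the paper's heavier machinery buys is not the theorem itself but the sharper asymptotics $u_p=(1+o(1))/\sqrt{p}$ in prime dimensions, relevant to Problem \ref{probun}, which your construction cannot give since its entries genuinely spread between $1/\sqrt{n}$ and nearly $\sqrt{2}/\sqrt{n}$.
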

\begin{proof}
If $A_1\in O(n_1)$ and $A_2\in O(n_2)$ are orthogonal matrices then the block-diagonal matrix $A_1\oplus A_2$ is an orthogonal matrix of order $n_1+n_2$. This implies $u_{n+m}\le \max\{u_n, u_m\}$. Also, Propositions \ref{had4km1} and \ref{n4kp1} below show that $u_p=\frac{1+o(1)}{\sqrt{p}}$ whenever the dimension $p$ is a prime. For general $n$ we must invoke the following weak-Goldbach type result from number theory: for every $\e>0$, every large enough odd number $n$ can be written as a sum of three primes $n=p_1+p_2+p_3$ where each $p_i$ lies in the interval $[(1-\e)n/3, (1+\e)n/3]$, while every large enough even number $n$ can be written as a sum of four primes $n=p_1+p_2+p_3+p_4$ where each $p_i$ lies in the interval $[(1-\e)n/4, (1+\e)n/4]$. This follows from the method of Vinogradov \cite{vin}. Hence, the block-diagonal construction together with this weak-Goldbach type result implies $u_n\le \frac{\sqrt{3}+o(1)}{\sqrt{n}}$ if $n$ is odd, and $u_n\le \frac{2+o(1)}{\sqrt{n}}$ if $n$ is even.
\end{proof}

\begin{remark}\rm
In connection with $l_n$ and $u_n$ it is natural to examine how the entries of a typical random orthogonal matrix behave.
It is well known that for any coordinate $z$ of a random unit vector we have $Pr(|z|>\frac{t}{\sqrt{n}})\le e^{-t^2/2}$
(this can be seen by upper bounding the area of a spherical cap of radius $r$ by that of a sphere of radius $r$).
This implies, via a simple calculation, that $u_M\le \frac{c\sqrt{\log n}}{\sqrt{n}}$ for a random orthogonal matrix $M$,
with high probability. Therefore, a random orthogonal matrix is typically ``not far'' from the bound $u_M\le \frac{c}{\sqrt{n}}$
given in Theorem \ref{thm:un}. On the contrary, the lowest absolute value in a random unit vector is smaller than $\frac{c}{n}$
with high probability (this can be seen by generating a random unit vector as the normalized vector of $n$ independent
Gaussian random variables). The same holds, a fortiori, for a random orthogonal matrix, which shows that the lowest
entry is typically very far from the optimal bound given in Theorem \ref{thm:block}.
\hfill $\square$
\end{remark}

\begin{remark}\rm
If we assume that the Hadamard conjecture holds then $l_n=u_n=\frac{1}{\sqrt{n}}$ for all $n$ divisible by 4. By the simple construction of Proposition \ref{had4km1} we also conclude that for $n\equiv 3 \ (mod \ 4)$ the quantities $l_n$ and $u_n$ are both of the magnitude $(1+o(1))\frac{1}{\sqrt{n}}$. However, we could not prove such a statement for the case $n\equiv 1, \ 2 \ (mod \ 4)$.
\end{remark}

\section{Specific constructions}

In the rest of this note we give some positive partial results with respect to Problems \ref{probln}, \ref{probfn}, \ref{probcircfn}. As noted in \cite[Section 3]{banica1} symmetric balanced incomplete block designs give rise to orthogonal matrices with two entries. If the parameters of the block design are suitable then the entries will be close to $\pm \frac{1}{\sqrt{n}}$. Namely, we have the following special case.

\begin{proposition}\label{had4km1}
(i) If the dimension $n$ is such that a Hadamard matrix $H$ of size $(n+1)\times (n+1)$ exists, then there exists an orthogonal matrix $M$ of size $n\times n$ with all entries having modulus $(1+o(1))\frac{1}{\sqrt{n}}$. In particular, this is the case if $n=p^r$ where $p=4k-1$ is a prime and $r$ is odd.

(ii) If the dimension $n$ is a prime of the form $4k-1$, then $M$ can be chosen to be circulant.
\end{proposition}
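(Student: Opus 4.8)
The plan is to exploit the standard correspondence between Hadamard matrices and conference-type constructions. For part (i), suppose $H$ is a Hadamard matrix of order $n+1$. After normalizing (by sign changes on rows and columns, which preserve the Hadamard property) we may assume the first row and first column of $H$ consist entirely of $+1$'s. Delete the first row and first column to obtain an $n\times n$ matrix $C$ with $\pm 1$ entries. The orthogonality of the rows of $H$ translates into: each row of $C$ has row-sum $-1$, and distinct rows of $C$ have inner product $-1$. Hence $CC^T = (n+1)I_n - J_n$, where $J_n$ is the all-ones matrix. One checks that $J_n$ commutes with $CC^T$ and that $C\mathbf{1} = -\mathbf{1}$, so $C$ maps the hyperplane $\mathbf{1}^\perp$ into itself and acts there as $\sqrt{n+1}$ times an orthogonal map; meanwhile $C\mathbf{1}=-\mathbf{1}$. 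The idea is then to correct the single ``bad'' direction: set
$$
M := \frac{1}{\sqrt{n+1}}\Bigl(C + \alpha\, \mathbf{1}\mathbf{1}^T\Bigr)
$$
for the scalar $\alpha$ chosen so that $M$ is orthogonal. Since $C = C' - \frac{1}{n}\mathbf{1}\mathbf{1}^T$ where $C'$ is the restriction part, a short computation picks out $\alpha = \frac{1}{n}\bigl(1 \pm \sqrt{n+1}\bigr)$, giving $M = \frac{1}{\sqrt{n+1}}C + \frac{1\pm\sqrt{n+1}}{n\sqrt{n+1}}\mathbf{1}\mathbf{1}^T$, an orthogonal matrix. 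Every entry of $M$ equals $\pm\frac{1}{\sqrt{n+1}} + O(1/n)$, hence has modulus $(1+o(1))\frac{1}{\sqrt{n}}$, as claimed. The final sentence of (i) follows from the Paley construction: when $p = 4k-1$ is prime and $r$ is odd, $n = p^r \equiv 3 \pmod 4$, so $n+1 \equiv 0 \pmod 4$ and a Paley Hadamard matrix of order $n+1 = p^r + 1$ exists (Paley I, using the quadratic-residue character of the field $\mathbb{F}_{p^r}$).

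For part (ii), when $n = p$ is itself a prime of the form $4k-1$, the Paley construction produces the matrix $C$ in an explicitly cyclic form: indexing rows and columns by $\mathbb{Z}/p\mathbb{Z}$, one takes $C_{i,j} = \chi(j-i)$ for $i\neq j$ and $C_{i,i}=-1$ (or $+1$, depending on convention), where $\chi$ is the Legendre symbol mod $p$. This $C$ is circulant by construction, and since $\mathbf{1}\mathbf{1}^T$ is circulant, the matrix $M$ built above as a linear combination of $C$ and $\mathbf{1}\mathbf{1}^T$ is again circulant. Thus the orthogonal matrix with near-$\frac{1}{\sqrt{n}}$ entries can be taken circulant, establishing (ii) and giving a partial answer to Problem \ref{probcircfn}.

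The routine parts are the two linear-algebra computations (verifying $CC^T = (n+1)I - J$ from Hadamard orthogonality, and solving for $\alpha$); the only place requiring care is checking that $\alpha$ is real and that the resulting perturbation is genuinely $O(1/n)$ uniformly in every entry, so that the modulus bound $(1+o(1))\frac{1}{\sqrt{n}}$ holds for \emph{all} entries simultaneously rather than merely on average. I expect the main conceptual point — and the only thing that is not a one-line check — to be the observation that a single rank-one correction suffices because $C$ is already ``almost orthogonal'': it fails to be a scalar multiple of an orthogonal matrix only in the one-dimensional span of $\mathbf{1}$, and that defect is precisely what $\mathbf{1}\mathbf{1}^T$ repairs without disturbing the flatness of the entries.
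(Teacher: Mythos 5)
Your proposal is correct and follows essentially the same route as the paper: normalize the Hadamard matrix, delete the first row and column, and replace the $\pm1$ core by a two-valued orthogonal matrix, with the circulant case coming from the Paley/quadratic-residue construction. Your rank-one correction $\frac{1}{\sqrt{n+1}}(C+\alpha\mathbf{1}\mathbf{1}^T)$ with $n\alpha^2-2\alpha-1=0$ is just a repackaging of the paper's two-variable system for the entry values $x,y$ (one root reproduces the paper's solution up to sign), so the constructions coincide.
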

\begin{proof}
(i) We can assume that the Hadamard matrix $H$ is in standard form, i.e. the first row and column of $H$ consist of 1's. Delete the first row and column of $H$, and in the remaining matrix $H'$ replace the entries $+1$ and $-1$ by the variables $x$ and $y$, respectively. Orthogonality of any two rows of $H'$ is now equivalent to $\frac{n-3}{4}x^2+\frac{n+1}{4}y^2+ \frac{n+1}{2}xy=0$, and the unit length of the rows of $H'$ is ensured by $\frac{n-1}{2}x^2+\frac{n+1}{2}y^2=1$. This system of equations admits the (non-unique) solution
$$x=\frac{-1}{\sqrt{n+1}-1}$$
$$y=\frac{2}{\sqrt{n+1}}-\frac{1}{\sqrt{n+1}-1},$$
and we define $M$ as the $n\times n$ matrix with these values. Note that both $x$ and $y$ have the order of magnitude $(1+o(1))\frac{1}{\sqrt{n}}$. In fact, the error term $o(1)$ here has the order of magnitude $O(\frac{1}{\sqrt{n}})$. We remark that $M$ corresponds to the {\it Hadamard design} associated to $H$, and this construction is a special case of the one described in \cite[Section 3]{banica1}. If $n=p^r$ where $p=4k-1$ is a prime and $r$ is odd, then a Hadamard matrix of size $(n+1)\times (n+1)$ exists by the Paley construction.

(ii) If $n$ is a prime of the form $4k-1$ then the Paley construction essentially leads to a circulant matrix. Namely, consider the following circulant matrix $M$ corresponding to the quadratic character of $\mathbb{F}_p$: $[M]_{i,j}=x$ if $i-j$ is a quadratic residue, and $[M]_{i,j}=y$ if $i-j$ is a non-residue or zero. The above values of $x,y$ ensure orthogonality of $M$.
\end{proof}

When $n$ is a prime of the form $4k+1$, the construction is much less trivial, as described below.

\begin{proposition}\label{n4kp1}
If the dimension is a prime $p=4k+1$, then there exists a circulant orthogonal matrix $M$ of size $p\times p$ with all entries having modulus $(1+o(1))\frac{1}{\sqrt{p}}$.
\end{proposition}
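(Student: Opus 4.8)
The first step is the standard translation of circulance into Fourier language. Let $\omega=e^{2\pi i/p}$ and, for $c=(c_0,\dots,c_{p-1})\in\RR^p$, set $\widehat c(a)=\sum_{j=0}^{p-1}c_j\omega^{aj}$ for $a\in\ZZ_p$. The circulant matrix with first row $c$ is orthogonal if and only if $|\widehat c(a)|=1$ for every $a$; since $c$ is real this forces $\widehat c(-a)=\overline{\widehat c(a)}$ and $\widehat c(0)\in\{-1,1\}$, and conversely any prescription $\widehat c(a)=e^{i\theta_a}$ with $\theta_{-a}=-\theta_a$, $\theta_0\in\{0,\pi\}$, yields a real orthogonal circulant matrix with entries $c_j=\frac1p\sum_a e^{i\theta_a}\omega^{-aj}$. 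So the proposition is equivalent to choosing unimodular phases $e^{i\theta_a}$, with this reality symmetry, whose inverse transform $(c_j)$ is flat, i.e. $|c_j|=(1+o(1))/\sqrt p$ for all $j$; in other words $(e^{i\theta_a})$ should be \emph{almost bi-unimodular}. (Note one cannot hope for exactly $|c_j|=1/\sqrt p$: that would be a circulant Hadamard matrix.)

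The natural first attempt uses the quadratic character $\chi$ of $\mathbb{F}_p$. Because $p\equiv1\pmod4$ the Gauss sum satisfies $\sum_{a\neq0}\chi(a)\omega^{aj}=\chi(j)\sqrt p$ for $j\neq0$, so the choice $\widehat c(a)=\chi(a)$ for $a\neq0$, $\widehat c(0)=1$, gives a genuinely orthogonal circulant matrix, namely $\tfrac1{\sqrt p}S+\tfrac1pJ$ with $S_{i,j}=\chi(i-j)$, whose entries are $c_j=\chi(j)/\sqrt p+1/p$ for $j\neq0$. All of these have modulus $\tfrac{1+O(p^{-1/2})}{\sqrt p}$ — except for the single value $c_0=1/p$, which is far too small. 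Thus the entire difficulty is to smear out this one spike without creating new ones, and my plan is to do so by deforming inside the $\tfrac{p-1}2$-dimensional manifold of real orthogonal circulants.

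Concretely, I would look for a solution of the form $\widehat c(a)=\chi(a)e^{i\varepsilon t_a}$ for $a\neq0$, $\widehat c(0)=1$, with $(t_a)$ an odd real vector (so $\theta_{-a}=-\theta_a$) and $\varepsilon=\varepsilon(p)\to0$. Writing $V(j):=\sum_{a\neq0}\chi(a)e^{i\varepsilon t_a}\omega^{-aj}$, flatness of $(c_j)$ means $|V(j)|=(1+o(1))\sqrt p$ for all $j$, and at $\varepsilon=0$ one has $V(j)=\chi(j)\sqrt p$ for $j\neq0$ and $V(0)=0$. Two things must then be arranged. First, $V(0)=\sum_{a\neq0}\chi(a)e^{i\varepsilon t_a}$ must grow to size $\asymp\sqrt p$: pairing $a\leftrightarrow-a$ (and using that $\chi$ is even) shows $V(0)=2\sum_{a>0}\chi(a)\cos(\varepsilon t_a)$, whose expansion has vanishing first-order term ($\sum_a\chi(a)t_a=0$ by parity) and second-order term $-\tfrac{\varepsilon^2}2\sum_a\chi(a)t_a^2$; so one tunes $\varepsilon$ so that $\varepsilon^2\bigl|\sum_a\chi(a)t_a^2\bigr|\asymp\sqrt p$, which demands that $(t_a)$ be chosen with $\bigl|\sum_a\chi(a)t_a^2\bigr|$ large. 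Second, for $j\neq0$ the deformation must not spoil $|V(j)|$: the unperturbed value $\chi(j)\sqrt p$ is real and the first-order correction is $\varepsilon\cdot i\sum_{a\neq0}\chi(a)t_a\omega^{-aj}$, which is again real (by the odd/even parity), so one needs $\varepsilon\,\max_j\bigl|\sum_{a\neq0}\chi(a)t_a\omega^{-aj}\bigr|=o(\sqrt p)$ uniformly in $j$, and similarly for the higher-order terms.

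The main obstacle is precisely this last, uniform-in-$j$ requirement. Together with the first condition it asks for an odd vector $(t_a)$ with $\bigl(\max_j\bigl|\sum_{a\neq0}\chi(a)t_a\omega^{-aj}\bigr|\bigr)^2=o\bigl(\sqrt p\,\bigl|\sum_a\chi(a)t_a^2\bigr|\bigr)$ — the character-twisted exponential sum of $(t_a)$ must be \emph{flat} (no spikes) while $\sum_a\chi(a)t_a^2$ stays comparably large, a genuine $L^\infty$-versus-$L^2$ tension, and a purely random choice of $(t_a)$ provably fails to make every $|V(j)|$ close to $\sqrt p$. I expect the correct $(t_a)$ to come again from cyclotomy, so that the sums in play are themselves (incomplete or twisted) Gauss sums that can be evaluated or estimated precisely; since $4\mid p-1$, a quartic character is available and is the natural candidate. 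An alternative is to replace the explicit deformation by a fixed-point / implicit-function argument: solve the $\tfrac{p-1}2$ equations $|\widehat c(a)|=1$ for $c$ near a suitable flat starting vector, deducing the coordinate-wise bounds on the solution from the flatness of the relevant character sums. Either way, the Fourier reduction, the parity cancellations, and the bookkeeping of the $o(1)$ terms are routine; all the real work is in the uniform bound on the perturbing exponential sum.
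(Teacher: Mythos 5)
Your reduction is sound and your diagnosis of the difficulty is exactly right: starting from the quadratic character one gets a circulant orthogonal matrix whose only defect is the single small entry $c_0=1/p$, and the whole problem is to redistribute that spike without creating new ones. But at the decisive point your argument stops being a proof and becomes a research plan: you do not exhibit any vector $(t_a)$ (cyclotomic, quartic-character, or otherwise) satisfying the competing requirements $\bigl|\sum_a\chi(a)t_a^2\bigr|$ large versus $\max_j\bigl|\sum_{a\neq0}\chi(a)t_a\omega^{-aj}\bigr|$ small, nor do you set up and verify an implicit-function argument; you say explicitly that ``all the real work'' remains. That missing step is the content of the proposition, so as it stands there is a genuine gap.

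The paper closes this gap not with a deterministic cyclotomic deformation but with a \emph{biased sparse random} perturbation, and it is worth noting that this also corrects your side claim that ``a purely random choice of $(t_a)$ provably fails'': what fails is unbiased random noise spread over all of $\ZZ_p^*$ (for which $\sum_a\chi(a)t_a^2$ has no drift); a random perturbation supported only on the non-residues succeeds. Concretely, one keeps the vector on the frequency side $\pm1$-valued and flips the sign at each symmetric pair of non-residues independently with probability $\rho=p^{-1/2}$ (and sets the value at $0$ to $+1$). Each flip contributes $+2$ to the zero Fourier coefficient with no character cancellation, so the expected boost there is $\asymp\sqrt p$, while at every frequency the variance of the perturbation is $O(\sqrt p)$; a Chernoff bound plus a union bound over the $p$ frequencies shows that with high probability all fluctuations are $O(p^{1/4+\varepsilon})$. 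The perturbed $\pm1$ vector therefore has Fourier transform of modulus $\sqrt p\,(1+O(p^{-1/4+\varepsilon}))$ at every frequency, and its normalized Fourier transform is the first row of the desired circulant orthogonal matrix (orthogonality being automatic since the frequency-side vector is unimodular). So the mechanism you were searching for is bias in the support of the perturbation, together with a routine large-deviation estimate, rather than a delicate deterministic exponential-sum bound; if you replace your continuous phase deformation by this random sign-flip construction, your outline turns into a complete proof along the paper's lines.
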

\begin{proof}
For this proof it will be convenient to identify the cyclic group $\ZZ_p$ with the numbers $(\frac{-p+1}{2}, \dots, \frac{p-1}{2})$, and use coordinates of vectors accordingly. Also, let $Q, NQ\subset \ZZ_p$ denote the set of quadratic residues and non-residues, respectively (0 is not included in either $Q$ or $NQ$). Consider the vector $\vv=(v_\frac{-p+1}{2}, \dots, v_\frac{p+1}{2})$ given by the quadratic character, i.e. $v_0=0$, and for nonzero $j$ we have $v_j=\pm 1$ according to whether $j\in Q$ or $j\in NQ$. Note that $\vv$ is symmetric, $v_j=v_{-j}$, because $p=4k+1$. Note also that $\hat \vv(k)=\sum_{j} v_j e^{2\pi ijk/p}=\pm \sqrt{p}$ for $k\ne 0$, and $\hat \vv(0)=0$. We will need a random modification of $\vv$. The construction is analogous to the one given in \cite[Theorem 9.2]{ruzsmat}.

\medskip

Fix $\e>0$, and let $\rho=\frac{1}{\sqrt{p}}$. As $NQ$ is symmetric, it can be written as a disjoint union of a set $H$ and its negative, $NQ=H\cup -H$, in other words $NQ=\cup_{y\in H} \{y, -y\}$. For each $y\in H$ consider independent random variables $\xi_y$ such that $\p(\xi_y=1/2)=\rho$, $\p(\xi_y=0)=1-\rho$. For $y=0$ set $\p(\xi_y=1/4)=\rho$, $\p(\xi_y=0)=1-\rho$. Consider the random vector $\vw=(w_\frac{-p+1}{2}, \dots, w_\frac{p-1}{2})$ given by $w_y=\xi_y$ if $y\in H$ or $y=0$, $w_y=\xi_{-y}$ if $-y\in H$, and $w_y=0$ if $y\in Q$. Let us evaluate the Fourier transform of the random vector $\vw$.

\begin{equation}
\E(\hat \vw(k))=\frac{1}{2}\rho \left(\frac{1}{2}+\sum_{y\in NQ}e^{2\pi i yk/p}\right)= \begin{cases}
    \sqrt{p}/4 & \text{if } k=0, \\
    \pm 1/4 & \text{if } k\ne 0
  \end{cases}
\end{equation}
and, for all $k$,
\begin{equation}
\D^2(\hat \vw(k))=\frac{1}{4}\rho(1-\rho) \left(\frac{1}{4}+\sum_{y\in H}(e^{2\pi i yk/p}+e^{-2\pi i yk/p})^2\right)\le
    \frac{1}{4}\sqrt{p}.
\end{equation}

We invoke here a large deviation inequality of Chernov as stated in \cite[Theorem 1.8]{taovu06}: let $X_1, \ldots, X_n$ be independent random variables satisfying $| X_i - \E(X_i)| \leq 1 $ for all $i$. Put
  $ X = X_1 + \ldots + X_n$ and let $\sigma^2$ be the variance of $X$. For any $t>0$ we have
   \[ \p( |X - \E(X)| \geq t\sigma) \leq 2 \max \left( e^{-t^2/4}, e^{-t\sigma/2} \right) .\]

\medskip

Using this estimate with $t=p^\e$ we obtain

\begin{equation}
\p\left(|\hat \vw(0)-\sqrt{p}/4|\ge \frac{1}{2}p^{\frac{1}{4}+\e}\right)\le 2e^{-\frac{p^{2\e}}{4}},
\end{equation}

\begin{equation}
\p\left(|\hat \vw(k) \mp \frac{1}{4}|\ge \frac{1}{2}p^{\frac{1}{4}+\e}\right)\le 2e^{-\frac{p^{2\e}}{4}} \ \text{for all } k\ne 0.
\end{equation}

Therefore, with high probability none of the above events occur, and we have $|\hat \vw(0)-\sqrt{p}/4|\le O(p^{\frac{1}{4}+\e})$ and $|\hat \vw(k)|\le O(p^{\frac{1}{4}+\e})$. Fix such a favourable vector $\vw$. We can also assume without loss of generality that $\xi_0=1/4$ and hence $w_0=1/4$ (we are free to change $w_0$ from 0 to $1/4$, if necessary, without altering the order of magnitude of $\hat \vw$).

\medskip

Finally, consider the vector $\vz=\vv+4\vw$, and let $\vu=\frac{1}{p}\hat{\vz}$. The vector $\vz$ is unimodular (some of the $-1$ entries in $\vv$ were changed to $+1$ and the value at 0 was changed to $+1$). Therefore the circulant matrix $M$ with first row $\vu$ is orthogonal. Also, the entries of $\vu$ are all of absolute value $\frac{1}{\sqrt{p}}(1+p^{-\frac{1}{4}+\e})$, by construction.

\medskip

We remark that a similar construction works if the dimension $p$ is of the form $4k-1$ but the result is inferior to Proposition \ref{had4km1} in the sense that the error term is larger.
\end{proof}

By combining the results of the propositions above we can answer Problem \ref{probfn} and \ref{probcircfn} for dimensions $n$ which are composed of large prime factors.

\begin{corollary}\label{cor1}
For any fixed $m$ let $\NN_m$ denote the set of positive integers $n=2^s p_1 \dots p_r$ such that each odd prime factor $p_j\ge n^{1/m}$ (the primes may appear with multiplicity). \\
(i) For every $m$ and every $n\in \NN_m$ there exists an orthogonal matrix $M$ of size $n\times n$ with all entries having modulus $(1+o_m(1))\frac{1}{\sqrt{n}}$. \\
(ii) If all the odd primes $p_j$ appearing in the factorization of $n$ are distinct and $s=0$ or $s=2$ then there exists a circulant orthogonal matrix $M$ of size $n\times n$ with all entries having modulus $(1+o_m(1))\frac{1}{\sqrt{n}}$.
\end{corollary}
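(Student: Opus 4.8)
The plan is to build $M$ as a tensor (Kronecker) product of the smaller orthogonal matrices supplied by Propositions \ref{had4km1} and \ref{n4kp1}, together with a Hadamard matrix of order $2^s$ for the power-of-two part. First I would record the elementary facts that if $A\in O(a)$ has all entries of modulus $(1+\e_a)\frac{1}{\sqrt a}$ and $B\in O(b)$ has all entries of modulus $(1+\e_b)\frac1{\sqrt b}$, then $A\otimes B\in O(ab)$ has all entries of modulus $(1+\e_a)(1+\e_b)\frac{1}{\sqrt{ab}}$; and that a tensor product of circulant matrices indexed by cyclic groups of coprime orders is again circulant (via the CRT isomorphism $\ZZ_{b}\otimes\ZZ_{c}\cong\ZZ_{bc}$ when $\gcd(b,c)=1$). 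These two observations reduce the whole statement to the prime-power (resp. prime) building blocks.

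Next, for part (i), given $n=2^s p_1\cdots p_r\in\NN_m$, I would take $H$ to be a normalized Hadamard matrix of order $2^s$ (entries exactly $\pm 2^{-s/2}$, so error $0$), and for each odd prime $p_j$ an $O(p_j)$ matrix with entries of modulus $(1+O(p_j^{-1/4+\e}))\frac1{\sqrt{p_j}}$: for $p_j\equiv 1\pmod 4$ this is Proposition \ref{n4kp1}, and for $p_j\equiv 3\pmod 4$ it is Proposition \ref{had4km1}(i) (the remark after Proposition \ref{n4kp1} also gives a circulant version here, with a weaker but still $o(1)$ error). Since each $p_j\ge n^{1/m}$, there are at most $r\le m$ such factors, so the product of the error factors is $\bigl(1+O(n^{-1/(4m)+\e'})\bigr)^{m}=1+o_m(1)$. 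Forming $H\otimes M_{p_1}\otimes\cdots\otimes M_{p_r}$ then gives the desired $M\in O(n)$ with all entries of modulus $(1+o_m(1))\frac1{\sqrt n}$.

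For part (ii) I would impose the extra hypotheses exactly so that the tensor product stays circulant. When the odd primes $p_j$ are pairwise distinct, the cyclic groups $\ZZ_{p_j}$ have coprime orders, so the Kronecker product of the circulant matrices from Propositions \ref{had4km1}(ii) and \ref{n4kp1} is circulant of order $p_1\cdots p_r$. If $s=0$ we are done; if $s=2$ we use the circulant orthogonal matrix of order $4$ with all entries $\pm\frac12$ (e.g.\ the circulant with first row $\frac12(1,1,1,-1)$, which is orthogonal), whose order $4$ is coprime to the odd part, and tensor it in. (The cases $s=1$ or $s\ge 3$ are excluded because there is no circulant Hadamard matrix of those $2$-power orders — indeed Ryser's conjecture and the impossibility for $n=2$ — so the argument genuinely needs $s\in\{0,2\}$.)

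The main obstacle is purely bookkeeping rather than conceptual: one must check that the tensor product of circulant matrices indexed by $\ZZ_{b}$ and $\ZZ_{c}$ with $\gcd(b,c)=1$ is genuinely circulant for $\ZZ_{bc}$ — this is the statement that the natural basis reindexing coming from the CRT turns $C_1\otimes C_2$ into a single circulant — and that multiplying $r\le m$ error factors each of the form $1+O(n^{-c/m})$ still yields $1+o_m(1)$, which is where the hypothesis $p_j\ge n^{1/m}$ is used. Neither step is hard, but both should be stated carefully so the dependence of the $o_m(1)$ term on $m$ is transparent.
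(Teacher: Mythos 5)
Your proposal is correct and follows essentially the same route as the paper: part (i) is the tensor product of a Hadamard matrix of order $2^s$ with the matrices from Propositions \ref{had4km1} and \ref{n4kp1}, with the hypothesis $p_j\ge n^{1/m}$, $r\le m$ preventing error accumulation, and part (ii) is the CRT-based circulant product (the paper builds the first row $\vy(\vk)=\prod_j \vx_j(k_j)$ directly and checks $\hat\vy$ is unimodular, which is the same as your reindexed Kronecker product), incorporating the $4\times 4$ circulant Hadamard matrix when $s=2$. Your write-up is, if anything, slightly more explicit about the quantitative form of the $o_m(1)$ term than the paper's.
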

\begin{proof}
(i) Let $n=2^sp_1 \dots p_r\in \NN_m $. Then $r\le m$ by the definition of $\NN_m$. Let $H$ denote a Hadamard matrix of size $2^s$ (such matrix exists, of course, e.g. $H=F_2^{\otimes s}$), and let $M_{p_j}$ denote the matrices corresponding to the primes $p_j$, as constructed in Propositions \ref{had4km1} and \ref{n4kp1}. Let $M$ be the tensorial product of all the matrices $H$ and $M_{p_j}$ (where $M_{p_j}$ is taken with the same multiplicity as $p_j$ in $n$). As $r\le m$ for any $n\in \NN_m$, the errors do not accumulate, and we still have $|[M]_{i,j}|=(1+o_m(1))\frac{1}{\sqrt{n}}$.

\medskip

(ii) The matrices $M_{p_j}$ are circulant by the constructions of Propositions \ref{had4km1} and \ref{n4kp1}. Let $\vx_j$ denote the first row of $M_{p_j}$.  If $s=0$, $n=p_1 \dots p_r$ with all $p_j$ distinct, then $\ZZ_n\equiv \ZZ_{p_1}\dots \ZZ_{p_r}$. Let $\vk=(k_1, \dots, k_r)\in \ZZ_{p_1}\dots \ZZ_{p_r}$ and let $\vy(\vk)=\prod_{j=1}^r \vx_j(k_j)$. Then $\vy$ generates a circulant matrix which is orthogonal (because all $\hat \vx_j$ are unimodular, and hence so is $\hat \vy$), and the entries of $\vy$ are of absolute value $(1+o_m(1))\frac{1}{\sqrt{n}}$. When $s=2$ we can incorporate the $4\times 4$ real circulant Hadamard matrix in the same manner.
\end{proof}

\begin{remark}\rm
Problem \ref{probcircfn} concerning the circulant case has an interesting connection to {\it ultraflat polynomials}. It is well-known that $\vx=(x_1, \dots, x_n)$ generates a circulant orthogonal matrix if and only if the Fourier transform $\hat \vx=(w_1, \dots, w_n)$ is unimodular on $\hat \ZZ_n$, i.e. $|w_j|=|w_k|$ for all $j, k$. If $\vx$ is real, then $\vw$ is conjugate symmetric, i.e. $w_j=\overline w_{n-j}$. We also want that all $|x_j|\approx \frac{1}{\sqrt{n}}$. Considering $w_1, \dots, w_n$ as variables we are led to the problem of constructing a polynomial $P(z)=\sum_{j=1}^n w_jz^j$ where $w_j=\overline w_{n-j}$ and $|w_j|=1$ (after re-normalization), such that $P(z)$ is ``flat'' at the $n$th roots of unity, i.e. $|P(\omega^j)|=(1+o(1))\sqrt{n}$, where $\omega=e^{2i\pi/n}$ and $j=0, \dots n-1$. Dropping the restriction $w_j=\overline w_{n-j}$ one can require $P(z)$ to be flat all over the unit circle (i.e. $|P(z)|=(1+o(1))\sqrt{n}$ for all $|z|=1$), and such polynomials are called {\it ultraflat}. The existence of ultraflat polynomials was proven by Kahane \cite{kahane}. However, the extra condition $w_j=\overline w_{n-j}$ prevents $P(z)$ from being ultraflat as shown by Remark 5.1 in \cite{erdelyi}: the restriction $w_j=\overline w_{n-j}$ implies $\max_{|z|=1} |P(z)|\ge (1+\e)\sqrt{n}$ with $\e=\sqrt{4/3}-1$. However, this does not answer Problem \ref{probcircfn} because we require $P(z)$ to be flat only at the $n$th roots of unity. Problem \ref{probcircfn} can therefore be regarded as a discretized version (at the $n$th roots of unity) of the question of existence of ultraflat polynomials with the self-conjugacy restriction $w_j=\overline w_{n-j}$. 

Notice also that Corollary \ref{cor1} gives an affirmative answer to Problem \ref{probcircfn} for some dimensions $n$ (e.g. when $n=4p$, $p$ being a prime). Therefore, trying to prove Ryser's conjecture on the non-existence of circulant Hadamard matrices by giving a negative answer to Problem \ref{probcircfn} cannot possibly work. In the other direction, hoping to construct a circulant Hadamard matrix by first constructing a circulant flat orthogonal matrix and then modifying its entries is also rather naive and hopeless in our opinion. \hfill $\square$
\end{remark}

We saw in Proposition \ref{had4km1} that given any Hadamard matrix $H$ one can reduce the dimension by 1, and construct a flat orthogonal matrix. It is natural to try also to increase the dimension by 1.
The general construction given in Proposition \ref{smallerror} allows to do so, but will introduce an entry of size $1/2$, thus destroying
the flatness. Our last specific construction concerns the increase of dimension by $1$ without destroying flatness, but it only works under some restrition on the dimension. We recall that a Hadamard matrix is called {\it regular} if the row sums and column sums of $H$ are all equal.

\begin{proposition}\label{regular}
Assume the dimension $n$ is such that a regular Hadamard matrix $H$ of order $n$ exists (this implies $n=4k^2$ for some $k$). Then there exists an orthogonal matrix $M$ of size $(n+1)\times (n+1)$ with all entries having modulus $(1+o(1))\frac{1}{\sqrt{n+1}}$.
\end{proposition}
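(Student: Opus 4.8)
The plan is to realize $M$ as a single reflection applied to the orthogonal matrix $\left(\begin{smallmatrix}1&0\\0&H\end{smallmatrix}\right)$, the reflection being chosen to ``smear'' the isolated $1$ in the corner over the all-ones direction of $\RR^n$. Regularity of $H$ is exactly the hypothesis that makes this clean: it says that the all-ones vector is an eigenvector of $H$, so after the reflection the Hadamard block is perturbed only by a small rank-one correction.

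First I normalize: rescale $H$ so that its entries are $\pm\frac1{\sqrt n}$ and $HH^T=I_n$. All row and column sums are then equal to a common value $\sigma$, and $\sigma^2 n=\|H\1\|^2=\1^TH^TH\1=n$ forces $\sigma=\pm1$; after replacing $H$ by $-H$ if necessary we may assume $H\1=\1$ and $\1^TH=\1^T$, equivalently $HJ=JH=J$ with $J=\1\1^T$. (In the $\pm1$ normalization $\sigma=\pm\sqrt n$ must be an integer, which is where $n=4k^2$ enters.) Next, writing $\mathbf v=\frac1{\sqrt n}\1$ and splitting $\RR^{n+1}=\RR\oplus\RR^n$ with first basis vector $e_0$ and $\tilde{\mathbf v}=(0,\mathbf v)$, for a parameter $\phi$ let $G\in O(n+1)$ be the orthogonal involution that is the identity on $\mathrm{span}(e_0,\tilde{\mathbf v})^\perp$ and acts on $\mathrm{span}(e_0,\tilde{\mathbf v})$, in the ordered orthonormal basis $(e_0,\tilde{\mathbf v})$, by $\left(\begin{smallmatrix}\cos\phi&\sin\phi\\\sin\phi&-\cos\phi\end{smallmatrix}\right)$. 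Expressed in the standard basis,
\[
  G=\begin{pmatrix}\cos\phi & \tfrac{\sin\phi}{\sqrt n}\,\1^T\\[4pt] \tfrac{\sin\phi}{\sqrt n}\,\1 & I_n-\tfrac{1+\cos\phi}{n}J\end{pmatrix},
\]
and I set $M:=\left(\begin{smallmatrix}1&0\\0&H\end{smallmatrix}\right)G\in O(n+1)$, a product of orthogonal matrices. Using $H\1=\1$ and $HJ=J$ this collapses to
\[
  M=\begin{pmatrix}\cos\phi & \tfrac{\sin\phi}{\sqrt n}\,\1^T\\[4pt] \tfrac{\sin\phi}{\sqrt n}\,\1 & H-\tfrac{1+\cos\phi}{n}J\end{pmatrix}.
\]
(Equivalently, one can bypass $G$ and simply write down this matrix with free parameters in place of $\cos\phi,\sin\phi$, then check directly that its columns are orthonormal; that verification uses only $HH^T=I_n$ and the constancy of the column sums of $H$.)

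Now I choose $\cos\phi=\frac1{\sqrt{n+1}}$, so that $\sin\phi=\sqrt{\frac n{n+1}}$. Then the corner entry of $M$ is exactly $\frac1{\sqrt{n+1}}$, and every off-corner entry in the first row and first column equals $\frac{\sin\phi}{\sqrt n}=\frac1{\sqrt{n+1}}$ exactly. The only entries that are not on the nose are those of the block $H-\frac{1+\cos\phi}{n}J$, which take the two values $\pm\frac1{\sqrt n}-\frac{1+\cos\phi}{n}$; their moduli equal $\frac1{\sqrt n}\bigl(1\pm\frac{1+\cos\phi}{\sqrt n}\bigr)=\frac1{\sqrt n}\bigl(1+O(n^{-1/2})\bigr)$, and since $\frac1{\sqrt n}=\frac1{\sqrt{n+1}}\bigl(1+O(n^{-1})\bigr)$, every entry of $M$ has modulus $(1+o(1))\frac1{\sqrt{n+1}}$, which is the claim.

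I do not expect a genuine obstacle here: the content of the proof is finding the construction, i.e. recognizing that regularity lets one fix the all-ones direction and hence rotate the isolated $1$ of $\left(\begin{smallmatrix}1&0\\0&H\end{smallmatrix}\right)$ into a flat row and column with a single reflection, at the cost of perturbing the Hadamard block only by a rank-one term of size $O(1/n)$. Once that is seen, the matrix identities, the choice of $\phi$ and the error estimate are all routine.
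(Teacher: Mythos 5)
Your proof is correct, and the matrix you arrive at is essentially the paper's: a constant border of modulus $\frac{1}{\sqrt{n+1}}$ together with the normalized regular Hadamard block shifted by a constant multiple of the all-ones matrix $J$. (The paper takes border entries $a=-\frac{1}{\sqrt{n+1}}$ and shift $b=\frac{1-(n+1)^{-1/2}}{n}$, you take the mirror choice $+\frac{1}{\sqrt{n+1}}$ and $\frac{1+(n+1)^{-1/2}}{n}$; both variants are orthogonal.) The genuine difference is how orthogonality is established: the paper writes the entries down directly and leaves the verification as ``an easy calculation'', whereas you factor $M=\mathrm{diag}(1,H)\,G$ with $G$ an explicit orthogonal reflection mixing $e_0$ with the all-ones direction, so orthogonality is automatic and the choice $\cos\phi=\frac{1}{\sqrt{n+1}}$ is seen to be exactly what makes the border flat. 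Your route buys a conceptual explanation of where the constants come from, and shows in passing that only constancy of the row sums of $H$ is actually used; the paper's version is shorter but leaves the reader to redo the inner-product computations. The closing error estimates are the same in both arguments.
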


\begin{proof}
Let $b=\frac{1-(n+1)^{-1/2}}{n}$ and replace the positive entries of $H$ by $\frac{1}{\sqrt{n}}-b$, while the negative entries with $-\frac{1}{\sqrt{n}}-b$ (the sign of $b$ is purposefully negative in both cases). Next, extend this matrix by a new row and column filled with entries $a=\frac{-1}{\sqrt{n+1}}$, and let $M$ be the arising matrix. The entries of $M$ are of modulus $(1+o(1))\frac{1}{\sqrt{n+1}}$ (in fact, the error term is of the order $\frac{1}{\sqrt{n}}$), and an easy calculation shows that $M$ is orthogonal.
\end{proof}

\medskip

We end this note by emphasizing that Problems \ref{probln}, \ref{probfn}, \ref{probcircfn} all remain open for dimensions $n$ with small prime factors, i.e. for dimensions $n$ which are not covered by Corollary \ref{cor1}.

\end{document}